\numberwithin{equation}{section} \allowdisplaybreaks
\begin{document}
\newtheorem{theorem}{Theorem}[section]
\newtheorem{defin}{Definition}[section]
\newtheorem{prop}{Proposition}[section]
\newtheorem{corol}{Corollary}[section]
\newtheorem{lemma}{Lemma}[section]
\newtheorem{rem}{Remark}[section]
\newtheorem{example}{Example}[section]
%\label{} %\ref{}
\title{Soldered tensor fields of normalized submanifolds}
\author{{\small by}\vspace{2mm}\\Izu Vaisman}
\date{{\footnotesize Dedicated to the centenary of the Mathematical
Seminar ``Al. Myller", Jassy, Romania and to Acad. Prof.
Constantin Corduneanu on his 80-eth anniversary}}
\maketitle
{\def\thefootnote{*}\footnotetext[1]%
{{\it 2000 Mathematics Subject Classification: 53C40} .
\newline\indent{\it Key words and phrases}: Normalized submanifold, Soldered
tensor field}}
\begin{center} \begin{minipage}{12cm}
A{\footnotesize BSTRACT. Soldered forms, multivector fields and
Riemannian metrics were studied in our earlier paper
\cite{V1}. In particular, it was shown that a Riemannian
submanifold is totally geodesic iff the metric is soldered to the
submanifold. In the present paper, we discuss general, soldered
tensor fields. In particular, we prove that the almost complex
structure of an almost K\"ahler manifold is soldered to a
submanifold iff the latter is an invariant, totally geodesic
submanifold.}
\end{minipage}
\end{center}
\vspace*{5mm}
%\noindent
%begin{center} %\section %\end{center}
%\begin{center}
\section{Introduction}
In the present paper, all the manifolds, mappings, bundles, tensor
fields, etc. are differentiable of class $C^\infty$ and we use the
standard notation of Differential Geometry, including the Einstein
summation convention. The reader may consult \cite{KN} for all the
differential geometric notion and results that are used in the
paper.

If $N^n$ is a submanifold of $M^m$ (indices denote dimension),
then a {\it normalization} of $N$ by a {\it normal bundle} $\nu N$
is a splitting
\begin{equation} \label{split} TM|_N=TN\oplus\nu N
\end{equation} ($T$ denotes tangent bundles). A submanifold
endowed with a normalization is called a {\it normalized
submanifold} and a vector field $X$ on $M$ is tangent or normal to
$N$ if $X|_N$ belongs to $TN,\nu N$, respectively. The best known
case is that of a {Riemannian normalization} $\nu N=T^{\perp_g}N$,
where $g$ is a Riemannian metric on $M$. In fact, given an
arbitrary normalization, it is easy to construct metrics $g$ such
that the normalization is $g$-Riemannian. Similarly, if $N$ is a
symplectic submanifold of a symplectic manifold $(M,\omega)$, $\nu
N=T^{\perp_\omega}N$ defines the {\it symplectic normalization}.
Another interesting example is that of a submanifold $N$ such
that, $\forall x\in N$, $T_xM=T_xN\oplus T_x\mathcal{F}$, where $
\mathcal{F}$ is a foliation  of $M$; then we may take $\nu
N=T\mathcal{F}|_N$.

In our earlier paper \cite{V1} we discussed differential forms,
multivector fields and Riemannian metrics that have a special kind
of contact with a normalized submanifold; these were said to be
{\it soldered} to the submanifold. In particular, it was shown
that a Riemannian submanifold is totally geodesic iff the metric
is soldered to the submanifold and that a submanifold of a Poisson
manifold is a (totally) Dirac submanifold iff there exists a
normalization such that the Poisson bivector field is soldered to
the submanifold with respect to this normalization.

In the present paper we give a general definition for the notion
of soldering of an arbitrary tensor field and we consider an
obstruction to soldering, which, essentially, is a generalization
of the second fundamental form of a Riemannian submanifold. We
establish some formulas for the calculation of this obstruction
and get corresponding applications. In particular, we prove that
the almost complex structure $J$ of an almost K\"ahler manifold is
soldered to a submanifold iff the latter is a $J$-invariant,
totally geodesic submanifold.
\section{Soldered tensor fields}
Let $(N^n,\nu N)$ be a normalized submanifold of $M^m$ and let
$\iota:N\subseteq M$ be the corresponding embedding.

First, we exhibit some adequate, local coordinates around the
points of $N$. Let $\sigma:W\rightarrow N$ be a tubular
neighborhood of $N$ such that $\forall x\in N$, $T_x(W_x)=\nu_x N$
($W_x$ is the fiber of $W$ and $\nu_xN$ is the fiber of $\nu N$ at
$x$). For every point $x\in N$ there exists a
$\sigma$-trivializing neighborhood $U$ with coordinates $(x^a)$
$(a,b,c,...= 1,...,m-n)$ around $x$ on the fibers of $\sigma$,
such that $x^a|_{N\cap U}=0$, and coordinates $(y^u)$
$(u,v,w,...=m-n+1,...,m)$ around $x$ on $N\cap U$. We say that
$(x^a,y^u)$ are {\it adapted local coordinates}.

Then,
\begin{equation} \label{adapted} TN|_{N\cap U}=span\left\{\left.\frac{\partial}
{\partial y^u}\right|_{x^a=0}\right\},\;\; \nu N|_{N\cap U}
=span\left\{\left.\frac{\partial} {\partial
x^a}\right|_{x^a=0}\right\}\end{equation} and the transition
functions between systems of adapted local coordinates have the
local form
\begin{equation}
\label{transition} \tilde x^a=\tilde x^a(x^b,y^v),\;\tilde
y^u=\tilde y^u(y^v),
\end{equation} where \begin{equation} \label{transcond}
\left.\frac{\partial\tilde x^a}{\partial y^v}\right|_{x^b=0}=0,
\;\frac{\partial\tilde y^u}{\partial x^b}\equiv0.
\end{equation}
Furthermore, (\ref{split}) implies
\begin{equation} \label{dualsplit}
T^*M|_N=T^*N\oplus\nu^*N,\end{equation} and
\begin{equation}\label{dualspan}
\begin{array}{c} T^*N=ann(\nu N)= span\{dy^u|_{x^a=0}\},\vspace{2mm}\\
\nu^*N=ann(TN)=span\{dx^a|_{x^a=0}\} \end{array}
\end{equation} ($ann$ denotes annihilator spaces).

Now we give the following general definition.
\begin{defin}\label{formsold} {\rm A tensor field
$A\in\mathcal{T}^p_q(M)$ (where $\mathcal{T}$ denotes a space of
tensor fields) is {\it soldered} to the normalized submanifold
$(N,\nu N)$ if for any normal vector field $X\in\Gamma TM$ of $N$
one has
\begin{equation}\label{eqsoldA} (L_XA)_x(Y_1,...,Y_q,\xi_1,...,\xi_p)=0,
\end{equation} for any $x\in N$ and any arguments
$Y_1,...,Y_q\in T_xN,\xi_1,...,\xi_p\in ann\,\nu_x N$.}
\end{defin}

In (\ref{eqsoldA}) $L$ denotes the Lie derivative and it turns out
that (\ref{eqsoldA}) is a combination of algebraic and
differential conditions. Indeed, we have
\begin{prop}\label{propalogcond} If the tensor field
$A\in\mathcal{T}^p_q(M)$ is soldered to the normalized submanifold
$(N,\nu N)$, then, for any fixed vectors $Y_1,...,Y_q\in T_xN$ and
covectors $\xi_1,...,\xi_p\in ann\,\nu_x N$, the following algebraic
conditions must hold:

1) the $1$-forms $\alpha_i\in T^*M|_N$, $i=1,...,q$, defined by
\begin{equation}\label{defalpha}\alpha_i(V)=A|_N(Y_1,...,Y_{i-1},V,
Y_{i+1},...,Y_q,\xi_1,...,\xi_p),\;\;V\in TM|_N,\end{equation}
belong to $ann\,\nu N$;\\

2) the vector fields $Z_j\in TM|_N$, $j=1,,,,p$, defined by
\begin{equation}\label{defZ}
Z_j(\gamma)=A|_N(Y_1,...Y_q,\xi_1,...,,\xi_{j-1},\gamma,
\xi_{j+1},...,\xi_p),\;\;\gamma\in T^*M|_N,\end{equation} are tangent to $N$.
\end{prop}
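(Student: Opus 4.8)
The plan is to wring the two algebraic statements out of the single relation \eqref{eqsoldA} by testing Definition~\ref{formsold} against well-chosen normal vector fields. The point is that $(L_XA)_x$, for $x\in N$, depends only on the $1$-jet of $X$ at $x$, and — since ``normal'' is merely the pointwise condition $X|_N\subset\nu N$ — essentially any $1$-jet is realised by some normal field. In adapted coordinates $(x^a,y^u)$ at $x$, with $y(x)=y_0$, two families will do (extend globally by a cut-off if needed). First, for $v=v^a\,\partial/\partial x^a\in\nu_xN$ and a cotangent vector $\beta=\beta_u\,dy^u$ of $N$ at $x$, the field
\[
X_{(v,\beta)}=\Big(\sum_u\beta_u\,(y^u-y_0^u)\Big)\sum_a v^a\,\frac{\partial}{\partial x^a}
\]
is normal, vanishes at $x$, and its linearisation $(dX)_x\colon T_xM\to T_xM$ at the zero $x$ kills $\nu_xN$ and sends $W\in T_xN$ to $\beta(W)\,v$. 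Second, for $\mu=\mu_a\,dx^a\in\nu^*_xN=ann\,T_xN$ and $w\in T_xM$, the field $X_{(\mu,w)}=\big(\sum_a\mu_a x^a\big)\,w$ vanishes along the whole of $N$ (hence is normal) and its linearisation at $x$ kills $T_xN$ and sends $u\in\nu_xN$ to $\mu(u)\,w$.

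For part~1) I would take $X=X_{(v,\beta)}$ in \eqref{eqsoldA}. Expanding the Lie derivative by the Leibniz rule applied to the function $A(\hat Y_1,\dots,\hat Y_q,\hat\xi_1,\dots,\hat\xi_p)$ (the $\hat Y_i$, $\hat\xi_j$ being arbitrary local extensions of the given arguments),
\[
(L_XA)_x(Y_1,\dots,\xi_p)=\big(X\!\cdot\!A(\hat Y_1,\dots)\big)_x-\sum_{i=1}^q A_x\big(\dots,[X,\hat Y_i]_x,\dots\big)-\sum_{j=1}^p A_x\big(\dots,(L_X\hat\xi_j)_x,\dots\big),
\]
the first term is $0$ because $X_x=0$; since $X_x=0$ one has $[X,\hat Y_i]_x=-(dX)_x(Y_i)=-\beta(Y_i)\,v\in\nu_xN$, so the $i$-th middle term equals $-\beta(Y_i)\,\alpha_i(v)$ with $\alpha_i$ as in \eqref{defalpha}; and $(L_X\hat\xi_j)_x=\xi_j\circ(dX)_x=0$, since $(dX)_x$ takes values in $\nu_xN$ while $\xi_j\in ann\,\nu_xN$. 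Hence $(L_XA)_x(Y_1,\dots,\xi_p)=\sum_i\beta(Y_i)\,\alpha_i(v)$, and soldering makes this vanish for every such $\beta$ and every $v\in\nu_xN$. As $\beta$ ranges over $T^*_xN$ we read off $\sum_i\alpha_i(v)\,Y_i=0$ in $T_xN$; if $Y_1,\dots,Y_q$ are linearly independent this gives $\alpha_i(v)=0$ for each $i$, and, $v\in\nu_xN$ being arbitrary, $\alpha_i\in ann\,\nu N$.

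Part~2) follows from the mirror argument with $X=X_{(\mu,w)}$. Again $(X\!\cdot\!A)_x=0$; this time $[X,\hat Y_i]_x=-(dX)_x(Y_i)=0$ because $(dX)_x$ annihilates $T_xN$, so \emph{all} the middle terms vanish; and $(L_X\hat\xi_j)_x=\xi_j\circ(dX)_x=\xi_j(w)\,\mu$, so $(L_XA)_x(Y_1,\dots,\xi_p)=-\sum_j\xi_j(w)\,Z_j(\mu)$ with $Z_j$ as in \eqref{defZ}. Soldering gives $\sum_j\xi_j(w)\,Z_j(\mu)=0$ for all $w\in T_xM$ and all $\mu\in ann\,T_xN$; for fixed $\mu$ this means $\sum_j Z_j(\mu)\,\xi_j=0$ in $T^*_xM$, so if $\xi_1,\dots,\xi_p$ are linearly independent then $Z_j(\mu)=0$ for all $j$, and since $\mu\in ann\,T_xN$ was arbitrary, $Z_j\in T_xN$.

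The step I expect to be the real obstacle is the last one in each part: passing from $\sum_i\alpha_i(v)\,Y_i=0$ (resp.\ from $\sum_j Z_j(\mu)\,\xi_j=0$) to the individual vanishing of the coefficients. This is immediate when the fixed arguments are linearly independent, and that independence is in fact necessary for the stated conclusion; so a careful treatment should either invoke it or spell out the reduction of the general case to the independent one. Everything else is routine manipulation of the Lie derivative.
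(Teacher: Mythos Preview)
Your approach and the paper's are essentially the same: both exploit the $C^\infty(M)$-nonlinearity of $X\mapsto L_XA$, encoded in formula \eqref{Lgenform}, by testing the soldering hypothesis on normal fields of the form $fX$ (your $X_{(v,\beta)}$) to extract condition 1) and on fields $lY$ with $l|_N=0$ (your $X_{(\mu,w)}$) to extract condition 2). Your computations are more explicit about the $1$-jets involved, but the upshot is identical: one obtains $\sum_i\beta(Y_i)\,\alpha_i(v)=0$ for every $\beta\in T_x^*N$ and $v\in\nu_xN$, equivalently $\sum_i\alpha_i(v)\,Y_i=0$ in $T_xN$, and the analogous relation $\sum_j\xi_j(w)\,Z_j(\mu)=0$ for condition~2).

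Your worry about the last step is well founded, and the obstacle is worse than a missing reduction: the step \emph{fails} in general, and the proposition is not true as stated. Take $M=\mathbb{R}^2$ with coordinates $(x,y)$, $N=\{x=0\}$, $\nu N=\mathrm{span}\{\partial/\partial x\}$, and $A=dx\otimes dy-dy\otimes dx\in\mathcal{T}^0_2(M)$. Since $L_XA$ is again skew and $\dim T_xN=1$, one has $(L_XA)(Y_1,Y_2)=0$ for \emph{every} vector field $X$ and all $Y_1,Y_2\in T_xN$, so $A$ is trivially soldered; yet with $Y_2=\partial/\partial y$ one gets $\alpha_1(\partial/\partial x)=A(\partial/\partial x,\partial/\partial y)=1$, so condition 1) fails. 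The paper's own proof glosses over precisely the step you flagged and shares this gap. Both arguments \emph{do} go through under the extra hypotheses $q\le\dim N$ and $p\le\mathrm{codim}\,N$: to get, say, $\alpha_1(v)=0$ for given $Y_2,\dots,Y_q\in T_xN$, observe that then $\mathrm{span}\{Y_2,\dots,Y_q\}\subsetneq T_xN$, pick $Y_1$ outside this span and $\beta\in T_x^*N$ with $\beta(Y_1)=1$, $\beta(Y_k)=0$ for $k\ge2$, and read off $\alpha_1(v)=0$ from the relation above.
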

\begin{proof} Consider the general formula
\begin{equation}\label{Lgenform}(L_{\varphi V}A)(Y_1,...,Y_q,\xi_1,...,\xi_p)=
\varphi(L_{V}A)(Y_1,...,Y_q,\xi_1,...,\xi_p)\end{equation}
$$-\sum_{i=1}^q(Y_i\varphi)\alpha_i(V)-\sum_{j=1}^p\xi_j(V)(Z_j\varphi),$$
where $\alpha_i(V)$ and $Z_j\varphi=d\varphi(Z_j)$ are defined by
(\ref{defalpha}), (\ref{defZ}), respectively; the formula holds
for arbitrary arguments (not necessarily related to $N$) and for
any function $\varphi\in C^\infty(M)$. Conditions 1), 2), follow
from (\ref{Lgenform}) by taking $V=fX+lY$ where $l|_N=0$; this
vector field is normal to $N$ again, therefore, it also satisfies
(\ref{eqsoldA}). \end{proof}
\begin{prop}\label{condsoldloc} The tensor field $A\in\mathcal{T}^p_q(M)$
is soldered to the normalized submanifold $N$ iff the local
components of $A$ with respect to adapted coordinates satisfy the
conditions
\begin{equation}\label{12local}
A_{u_1,...,u_{i-1},a,u_{i+1},...,u_q}^{v_1,...,v_p}(0,y^w)=0,\;
A_{u_1,...,u_q}^{v_1,...,v_{j-1},a,v_{j+1},...,v_p}(0,y^w)=0
\end{equation} and
\begin{equation}\label{eqsoldloc}\left.
\frac{\partial A^{v_1,...,v_p}_{u_1,...,u_q}}{\partial
x^a}\right|_{x^b=0}=0.\end{equation}\end{prop}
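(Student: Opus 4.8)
The plan is to characterize the soldering condition in adapted local coordinates by carefully unpacking Definition \ref{formsold}, using the vector fields $\partial/\partial x^a$ as a local frame for the normal bundle and the structure of the transition functions \eqref{transition}--\eqref{transcond}.

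First I would observe that, by \eqref{adapted} and \eqref{dualspan}, the tangent vectors $Y_i$ at a point of $N\cap U$ are linear combinations of $\partial/\partial y^u|_{x=0}$, and the covectors $\xi_j$ are linear combinations of $dy^v|_{x=0}$; hence by multilinearity it suffices to test \eqref{eqsoldA} on these basis elements, which picks out exactly the components $A^{v_1,\dots,v_p}_{u_1,\dots,u_q}|_{x=0}$ and their Lie derivatives. Next I would note that the normal vector fields of $N$ are precisely, locally, the fields $X=X^a(x,y)\,\partial/\partial x^a$ with $X^a(0,y)$ arbitrary (plus tangential corrections vanishing on $N$, which by Proposition \ref{propalogcond} one may discard). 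Applying formula \eqref{Lgenform} with $\varphi$ chosen to realize an arbitrary value of $X^a(0,y)$, the soldering condition \eqref{eqsoldA} splits, just as in the proof of Proposition \ref{propalogcond}, into the algebraic part \eqref{12local} (the statements that $\alpha_i\in\mathrm{ann}\,\nu N$ and $Z_j\in TN$, which in coordinates say precisely that the components with a lower or upper normal index vanish on $N$) and the remaining differential part obtained from the "constant-$\varphi$" piece.

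For that differential part I would compute $(L_X A)(\partial/\partial y^{u_1},\dots,\partial/\partial x^a$-free arguments$)$ with $X=\partial/\partial x^a$ directly from the coordinate formula for the Lie derivative of a $(p,q)$-tensor:
\begin{equation*}
(L_X A)^{v_1\dots v_p}_{u_1\dots u_q}
= X^c\,\frac{\partial A^{v_1\dots v_p}_{u_1\dots u_q}}{\partial x^c}
+ \sum_i A^{v_1\dots v_p}_{\dots c\dots}\frac{\partial X^c}{\partial y^{u_i}}
- \sum_j A^{\dots c\dots}_{u_1\dots u_q}\frac{\partial X^c}{\partial y^{v_j}}.
\end{equation*}
Taking $X^c=\delta^c_a$ kills the last two sums (and even with a general $X^a(x,y)$ the correction terms involve only the components with a normal index, already shown to vanish on $N$ by \eqref{12local}), leaving exactly $\partial A^{v_1\dots v_p}_{u_1\dots u_q}/\partial x^a$ evaluated at $x=0$; setting this to zero for all $a$ is precisely \eqref{eqsoldloc}. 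Conversely, reading the same computation backwards shows that \eqref{12local} together with \eqref{eqsoldloc} forces $(L_X A)_x$ to vanish on the prescribed arguments for every normal $X$, so the conditions are sufficient as well. One must also check this is coordinate-independent, which follows because \eqref{transition}--\eqref{transcond} guarantee that the set of components $A^{v\dots}_{u\dots}|_{x=0}$ and their $x$-derivatives transform tensorially among themselves along $N$.

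The main obstacle I expect is bookkeeping: one has to be careful that the "mixed" derivative terms (an $x$-derivative of a component carrying a $y$-index, or a $y$-derivative interacting with the transition relations) do not contribute new constraints, and that using a local frame of the normal bundle which is not parallel in any sense still yields a clean statement — this is exactly where the tubular-neighborhood normalization $T_x(W_x)=\nu_xN$ and the vanishing $\partial\tilde y^u/\partial x^b\equiv 0$, $\partial\tilde x^a/\partial y^v|_{x=0}=0$ of \eqref{transcond} are used to absorb the would-be extra terms into the already-vanishing algebraic components.
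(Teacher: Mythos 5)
Your proposal is correct and follows essentially the same route as the paper: translate the algebraic conditions 1), 2) into \eqref{12local} via the adapted bases, obtain \eqref{eqsoldloc} by evaluating \eqref{eqsoldA} on $X=\partial/\partial x^a$, and recover the general case from \eqref{Lgenform}. The only blemish is a harmless index slip in your displayed Lie-derivative formula (the last sum should read $-\sum_j A^{\dots c\dots}_{u_1\dots u_q}\,\partial X^{v_j}/\partial x^c$), which does not affect the argument since those terms vanish anyway for the normal fields you use.
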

\begin{proof} Using the bases (\ref{adapted}), (\ref{dualspan}),
we see that conditions 1), 2) of Proposition \ref{formsold} are
equivalent to (\ref{12local}) and (\ref{eqsoldloc}) is
(\ref{eqsoldA}) expressed for $X=\partial/\partial x^a$.
Conversely, using formula (\ref{Lgenform}), it is easy to derive
(\ref{eqsoldA}) from (\ref{eqsoldloc}) and the algebraic
conditions 1), 2).\end{proof}

In the case of either a differential form or a multivector field
formulas (\ref{12local}), (\ref{eqsoldloc}) reduce to the
conditions for soldering forms and multivector fields given in
\cite{V1}.
\begin{example}\label{exfol} {\rm Assume that there exists a
foliation $\mathcal{F}$ of $M$ such that $\nu N=T\mathcal{F}|_N$
is a normalization of $N$. A tensor field $A\in\mathcal{T}^p_q(M)$
is said to be {\it projectable} or {\it foliated} if for any local
quotient manifold $Q_U=U/\mathcal{F}\cap U$ ($U$ is an open
neighborhood in $M$ where $\mathcal{F}$ is simple) there exists a
tensor field $A'\in\mathcal{T}^p_q(Q_U)$ that is $\pi$-related to
A ($\pi$ is the natural projection $U\rightarrow Q_U$). Let
$(y^u,x^a)$ be local coordinates such that the local equations of
the leaves are $y^u=const.$ (In particular, around points $x\in N$
we may use $N$-adapted local coordinates.) Then, it is easy to see
that $A$ is projectable iff it has a local expression of the
following form
\begin{equation}\label{Aproj} A= dy^{u_1}\otimes...\otimes
dy^{u_q}\otimes
[A_{u_1...u_q}^{v_1...v_p}(y)\frac{\partial}{\partial y^{v_1}}
\otimes...\otimes\frac{\partial}{\partial y^{v_p}}\end{equation} $$+
A_{u_1...u_q}^{a_1v_2...v_p}(x,y)\frac{\partial}{\partial x^{a_1}}
\otimes\frac{\partial}{\partial
y^{v_2}}...\otimes\frac{\partial}{\partial y^{v_p}} + ... +
A_{u_1...u_q}^{a_1a_2...a_p}(x,y)\frac{\partial}{\partial x^{a_1}}
...\otimes\frac{\partial}{\partial x^{a_p}}].$$ Formula
(\ref{Aproj}) shows that the projectable tensor fields are
characterized by the following global properties:

\hspace*{1.5mm}(i)\hspace*{5mm}
$A\in[\otimes^q(ann\,T^*\mathcal{F})]\otimes[\otimes^pTM]$,

(ii)\hspace*{5mm} $\forall X\in T\mathcal{F}$ one has $L_XA\in
T\mathcal{F}\otimes[\otimes^q(ann\,T^*\mathcal{F})]\otimes[\otimes^{p-1}TM]$.\\
Accordingly, we see that an $ \mathcal{F}$-projectable tensor field
$A$ is soldered to the submanifold $N$ iff the algebraic condition
2) of Proposition \ref{formsold} is satisfied. In particular, a
totally covariant, foliated tensor field necessarily is soldered to
any local transversal submanifold $N^n$ of the foliation
$\mathcal{F}^{m-n}$.}\end{example}
\begin{defin}\label{algadapt} {\rm A tensor field
$A\in\mathcal{T}^p_q(M)$ that satisfies the algebraic conditions 1),
2) of Proposition \ref{formsold} (equivalently, satisfies
(\ref{12local})) will be called {\it algebraically adapted to
$N$}.}\end{defin}
\begin{prop}\label{form2} If $A\in\mathcal{T}^p_q(M)$ is
algebraically adapted to $N$, the morphism $w_A:\nu N\rightarrow
\mathcal{T}^p_q(N)$ defined by
\begin{equation}\label{defw} w_A(\bar
X)(Y_1,...,Y_q,\xi_1,...,\xi_p)=L_{
X}A(Y_1,...,Y_q,\xi_1,...,\xi_p)|_N,\end{equation} where
$Y_1,...,Y_q\in\Gamma TN$, $\xi_1,...,\xi_p\in\Gamma(ann\,\nu N)$,
$\bar X\in\Gamma\nu N$ and $X$ is a vector field on $M$ with the
restriction $\bar X$ to $N$, is independent of the choice of the
extension $X$ of $\bar X$.
\end{prop}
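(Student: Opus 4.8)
The plan is to show that $w_A(\bar X)$ is well-defined by checking that the right-hand side of \eqref{defw} does not change when $X$ is replaced by another extension of $\bar X$. Since the map $X\mapsto L_XA$ is $\mathbb R$-linear, it suffices to prove that if $X$ is a vector field on $M$ with $X|_N=0$, then $(L_XA)(Y_1,\dots,Y_q,\xi_1,\dots,\xi_p)|_N=0$ for all $Y_i\in\Gamma TN$ and $\xi_j\in\Gamma(\mathrm{ann}\,\nu N)$, under the standing hypothesis that $A$ is algebraically adapted to $N$.

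The key computational tool is the general Leibniz-type formula \eqref{Lgenform}: writing $X=\varphi V$ locally, one gets
\[
(L_{\varphi V}A)(Y_1,\dots,Y_q,\xi_1,\dots,\xi_p)=\varphi\,(L_VA)(Y_1,\dots,Y_q,\xi_1,\dots,\xi_p)-\sum_{i=1}^q(Y_i\varphi)\,\alpha_i(V)-\sum_{j=1}^p\xi_j(V)(Z_j\varphi).
\]
Here I would argue locally in adapted coordinates $(x^a,y^u)$: a vector field vanishing on $N$ is a $C^\infty(M)$-combination $X=\varphi^a(x,y)\,\partial/\partial x^a+\psi^u(x,y)\,\partial/\partial y^u$ with $\varphi^a|_N=0$ and $\psi^u|_N=0$ (using that $N=\{x^a=0\}$, so each coefficient function vanishes on $N$ and hence can be written with a factor that vanishes on $N$). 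By $\mathbb R$-linearity of $L_\bullet A$ it is enough to treat a single term $X=\varphi W$ where $\varphi|_N=0$ and $W$ is a coordinate vector field. Then \eqref{Lgenform} gives three groups of terms to restrict to $N$: the term $\varphi\,(L_WA)(\dots)$ vanishes on $N$ because $\varphi|_N=0$; the terms $(Y_i\varphi)\,\alpha_i(W)$ vanish because $\alpha_i\in\mathrm{ann}\,\nu N$ by algebraic condition 1) while $W$ — being $\partial/\partial x^a$ or $\partial/\partial y^u$ — when it is tangential contributes nothing since $\alpha_i$ kills $\nu N$ only... so here I must be slightly careful; and the terms $\xi_j(W)(Z_j\varphi)$ vanish because $\xi_j\in\mathrm{ann}\,\nu N$ so $\xi_j$ applied to a normal coordinate field is zero, and $Z_j$ is tangent to $N$ by algebraic condition 2), so $Z_j\varphi|_N=0$ since $\varphi|_N=0$ and $Z_j$ differentiates along $N$.

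Let me make the middle term precise, since that is the main obstacle: for $W=\partial/\partial x^a$ (a normal coordinate field) one has $\alpha_i(W)=0$ directly from algebraic condition 1), so that term is zero on $N$ regardless of $Y_i\varphi$; for $W=\partial/\partial y^u$ (a tangential coordinate field) one instead uses that $Y_i\varphi|_N=0$, because $Y_i$ is tangent to $N$ and $\varphi|_N=0$ means $\varphi$ is constant (namely $0$) along $N$, so differentiating it along a tangent direction gives $0$. For the last term $\xi_j(W)(Z_j\varphi)$: if $W=\partial/\partial x^a$ then $\xi_j(W)=0$ because $\xi_j\in\mathrm{ann}\,\nu N$; if $W=\partial/\partial y^u$ then $Z_j\varphi|_N=0$ because $Z_j$ is tangent to $N$ (condition 2) and $\varphi$ vanishes on $N$. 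In every case each of the three contributions in \eqref{Lgenform} restricts to $0$ on $N$, so $(L_XA)(Y_1,\dots,Y_q,\xi_1,\dots,\xi_p)|_N=0$ whenever $X|_N=0$. Summing over the coordinate decomposition of an arbitrary $X$ with $X|_N=0$ and invoking linearity finishes the argument, establishing that $w_A(\bar X)$ depends only on $\bar X=X|_N$. I would close by noting that $w_A(\bar X)\in\mathcal T^p_q(N)$ — i.e. its values lie in the right tensor space over $N$ — follows from exactly the same algebraic conditions 1) and 2), which is presumably assumed known from the definition of algebraic adaptedness; the only genuinely new claim here is the independence of the extension, which is what the above handles.
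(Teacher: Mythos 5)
Your argument is correct and follows essentially the same route as the paper: both reduce the claim to showing that $(L_XA)(Y_1,\dots,Y_q,\xi_1,\dots,\xi_p)|_N=0$ whenever $X|_N=0$, by writing such an $X$ as a sum of terms $\varphi W$ with $\varphi|_N=0$ and killing each of the three contributions in (\ref{Lgenform}) via the algebraic conditions 1), 2) and the tangency of the $Y_i$ and $Z_j$. The paper packages this as the identity $(L_{fX+lZ}A)(\dots)|_N=f(L_XA)(\dots)|_N$ (which additionally records the $C^\infty(N)$-linearity of $w_A$ in $\bar X$), but the mechanism is the same as yours.
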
 \begin{proof} Since $A$ is algebraically $N$-adapted,
formula (\ref{Lgenform}) yields
\begin{equation}\label{invarianta}(L_{fX+lZ}A)(Y_1,...,Y_q,\xi_1,...,\xi_p)|_N=
f(L_{X}A)(Y_1,...,Y_q,\xi_1,...,\xi_p)|_N,\end{equation} for any
functions $f,l\in C^\infty(M)$ such that $l|_N=0$, any vector
field $X$ normal to $N$ and any vector field $Z$ on $M$. The case
$Z=0$ shows that $w_A$ is $C^\infty(N)$-linear in $\bar{X}$. The
case $f=1$ shows that $w_A(\bar{X})$ is independent of the choice
of the extension $X$ of $\bar{X}$, since, using coordinate
expressions, it easily follows that two extensions $X_1,X_2$ are
related by an equality of the form $X_2=X_1+\sum l_kZ_k$ where the
functions $l_k$ vanish on $N$.
\end{proof}
\begin{defin}\label{defobstr} {\rm The morphism $w_A$ will be called
the {\it soldering obstruction} of the algebraically $N$-adapted
tensor field $A$.}\end{defin}

The name is motivated by the fact that if $w_A=0$ then $A$ is
soldered to $N$. Notice that $w_A(\bar X)$ has the same symmetries
like $A$.
\section{Applications}
In this section we consider only Riemannian normalizations,
therefore, $M$ is endowed with a Riemannian metric $g$ and $\nu
N=T^{\perp_g}N$. All the vector fields denoted by $Y$ are tangent to
$N$ and all the vector fields denoted by $X$ are normal to $N$. The
reader is asked to pay attention to the situations where
calculations take place only along $N$. In \cite{V1}, as a
consequence of the Gauss-Weingarten formulas \cite{KN}, we proved
that the soldering obstruction of the metric $g$ is
\begin{equation}\label{invptg}
w_g(\bar X)(Y_1,Y_2)=(L_Xg)(Y_1,Y_2)|_N=-2g(\beta(Y_1,Y_2),\bar
X)\end{equation} where $X|_N=\bar X\in\Gamma\nu N$ and $\beta$ is
the second fundamental form of $N$. Accordingly, the metric is
soldered to a submanifold $N$ iff $N$ is a totally geodesic
submanifold of $M$.

For more applications we compute the soldering obstruction of a
tensor field $A$ of type $(1,1)$. From (\ref{12local}) it follows
that $A$ is algebraically adapted to $(N,\nu N)$ iff both $TN$ and
$\nu N$ are invariant by the endomorphism $A$ and we shall assume
that this condition holds. Together with the soldering invariant of
$A$ we define the bilinear {\it soldering form}
$\sigma_A(Y_1,Y_2)\in\nu N$ given by
\begin{equation}\label{soldform}
g(\sigma_A(Y_1,Y_2),\bar X)=w_A(\bar
X)(Y_1,\flat_gY_2).\end{equation} Of course, $A$ is soldered to
$(N,T^{\perp_g}N)$ iff $\sigma_A=0$.
\begin{prop}\label{symsold} Assume that the operator $A$ is either
symmetric or skew-symmetric with respect to $g$. Then, $\sigma_A$ is
symmetric, respectively, skew-symmetric iff $A$ is symmetric,
respectively, skew-symmetric with respect to the second fundamental
form $\beta$ of the submanifold $N$ of $(M,g)$.\end{prop}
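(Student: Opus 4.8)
The plan is to compute $\sigma_A$ in the same spirit as the computation of $w_g$ in (\ref{invptg}): turn the Lie derivative into the Levi--Civita connection $\nabla$ of $g$ and apply the Gauss--Weingarten formulas. First I would unwind the definitions. Since $\nu N=T^{\perp_g}N$, for $Y_2\in\Gamma TN$ the covector $\flat_gY_2$ lies in $ann\,\nu N$, so (\ref{soldform}) together with (\ref{defw}) gives
\begin{equation}\label{pp1}
g(\sigma_A(Y_1,Y_2),\bar X)=w_A(\bar X)(Y_1,\flat_gY_2)=g((L_XA)Y_1,Y_2)|_N,
\end{equation}
where $(L_XA)Y_1$ denotes the image of $Y_1$ under the endomorphism $L_XA$ and, as throughout this section, $X$ is normal to $N$ with $X|_N=\bar X$. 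Next I would introduce the $(0,2)$-tensor field $b_A(U,V):=g(AU,V)$. It is symmetric if $A$ is $g$-symmetric and a $2$-form if $A$ is $g$-skew; in both cases $b_A(TN,\nu N)=b_A(\nu N,TN)=0$, because $A$ preserves $TN$ and $\nu N$ while $g(TN,\nu N)=0$. Since $L_X$ commutes with contractions one has the tensorial identity
\begin{equation}\label{pp2}
g((L_XA)U,V)=(L_Xb_A)(U,V)-(L_Xg)(AU,V).
\end{equation}

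Then I would evaluate the right-hand side of (\ref{pp2}) along $N$, with $U=Y_1$ and $V=Y_2$ tangent to $N$. Since $AY_1|_N\in TN$, formula (\ref{invptg}) gives $(L_Xg)(AY_1,Y_2)|_N=-2g(\beta(AY_1,Y_2),\bar X)$. For the remaining term I would use the torsion-free expansion
\begin{equation}\label{pp3}
(L_Xb_A)(Y_1,Y_2)=(\nabla_Xb_A)(Y_1,Y_2)+b_A(\nabla_{Y_1}X,Y_2)+b_A(Y_1,\nabla_{Y_2}X)
\end{equation}
and restrict it to $N$: by the Weingarten formula the tangential part of $\nabla_{Y_i}X|_N$ is expressed through $\beta$, while its normal part is killed since $b_A(\nu N,TN)=b_A(TN,\nu N)=0$; moving $A$ across $g$ by its $g$-symmetry or $g$-skewness and using the symmetry of $\beta$, the last two terms of (\ref{pp3}) reduce to $\mp g(\beta(Y_1,AY_2),\bar X)-g(\beta(AY_1,Y_2),\bar X)$, with the upper sign when $A$ is $g$-symmetric. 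Combining this with (\ref{pp1}) and (\ref{pp2}), I expect to obtain
\begin{equation}\label{pp4}
g(\sigma_A(Y_1,Y_2),\bar X)=(\nabla_Xb_A)(Y_1,Y_2)|_N+g(\beta(AY_1,Y_2)\mp\beta(Y_1,AY_2),\bar X),
\end{equation}
with the upper sign when $A$ is $g$-symmetric and the lower sign when $A$ is $g$-skew (in which case $b_A$ is a $2$-form).

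Finally I would read off the symmetry of $\sigma_A$ from (\ref{pp4}). The covariant derivative $\nabla_Xb_A$ has the same symmetry type as $b_A$ — symmetric when $A$ is $g$-symmetric, skew (a $2$-form) when $A$ is $g$-skew — whereas, since $\beta$ is symmetric, $\beta(AY_1,Y_2)-\beta(Y_1,AY_2)$ is skew in $(Y_1,Y_2)$ and $\beta(AY_1,Y_2)+\beta(Y_1,AY_2)$ is symmetric in $(Y_1,Y_2)$. Hence, (anti)symmetrizing (\ref{pp4}) in the two tangent arguments, the $\nabla_Xb_A$ term drops out and, as $\bar X$ runs over $\Gamma\nu N$, one obtains $\sigma_A(Y_1,Y_2)\mp\sigma_A(Y_2,Y_1)=2(\beta(AY_1,Y_2)\mp\beta(Y_1,AY_2))$ (upper sign: $A$ $g$-symmetric). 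Therefore $\sigma_A$ is symmetric (resp.\ skew) if and only if $\beta(AY_1,Y_2)=\beta(Y_1,AY_2)$ (resp.\ $\beta(AY_1,Y_2)=-\beta(Y_1,AY_2)$) for all $Y_1,Y_2\in\Gamma TN$, that is, if and only if $A$ is symmetric (resp.\ skew) with respect to $\beta$.

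The step I expect to be the main obstacle is the appearance of the genuinely transversal term $(\nabla_Xb_A)(Y_1,Y_2)|_N$, equivalently $g((\nabla_XA)Y_1,Y_2)|_N$: unlike in the purely metric case it does not vanish and cannot be rewritten through $\beta$. What makes the statement come out cleanly is precisely that this term inherits the symmetry type of $A$, so that it contributes only to the component of $\sigma_A$ that is irrelevant for the assertion and therefore disappears under the (anti)symmetrization carried out in the last step.
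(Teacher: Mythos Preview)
Your argument is correct, but it takes a longer route than the paper. The paper never introduces $\nabla$ or the Gauss--Weingarten formulas here: it simply applies $L_X$ to the identity $g(AV_1,V_2)=\pm g(V_1,AV_2)$, which immediately gives $(L_Xg)(AY_1,Y_2)+g((L_XA)Y_1,Y_2)=\pm[(L_Xg)(AY_2,Y_1)+g((L_XA)Y_2,Y_1)]$, and then replaces the $(L_Xg)$-terms by $-2g(\beta(\cdot,\cdot),\bar X)$ via (\ref{invptg}); the result $g(\sigma_A(Y_1,Y_2)\mp\sigma_A(Y_2,Y_1),\bar X)=2g(\beta(AY_1,Y_2)\mp\beta(Y_1,AY_2),\bar X)$ drops out in one line. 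You instead compute $\sigma_A$ itself, obtaining your formula (\ref{pp4}); note that since $\nabla g=0$ your term $(\nabla_Xb_A)(Y_1,Y_2)$ equals $g((\nabla_XA)Y_1,Y_2)$, so (\ref{pp4}) is exactly formula (\ref{legAR2}) of Proposition~\ref{legAR}, which in the paper comes \emph{after} the present proposition and is proved by a separate argument. In effect you have merged the two propositions: more work up front, but you get the explicit expression of $\sigma_A$ as a byproduct, whereas the paper's proof of Proposition~\ref{symsold} is shorter precisely because it bypasses that explicit expression and exploits only the symmetry of $b_A$ under $L_X$.
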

\begin{proof} The assumed symmetry property is
\begin{equation}\label{symA}
g(AV_1,V_2)=\pm g(V_1,AV_2),\hspace{3mm} \forall V_1,V_2\in\Gamma
TM.\end{equation} If we take the Lie derivative $L_X$ of this
equality, modulo the equality itself, and use (\ref{invptg}), we
get
$$g(\sigma_A(Y_1,Y_2)\mp\sigma_A(Y_2,Y_1),\bar X)=
2g(\beta(AY_1,Y_2)\mp\beta(Y_1,AY_2),\bar X),$$ whence the
conclusion.\end{proof}

The following proposition expresses the soldering form of a
$g$-(skew)-symmetric $(1,1)$-tensor field $A$ in terms of the
Levi-Civita connection $\nabla$ of $g$ and the second fundamental
form $\beta$ of the submanifold $N$.
\begin{prop}\label{legAR} Assume that the algebraically
$N$-adapted tensor field $A\in\mathcal{T}_1^1(M)$ satisfies
{\rm(\ref{symA})}. Then, the following formula, where the sign in
the right hand side is opposite to the sign in {\rm(\ref{symA})},
holds:
\begin{equation}\label{legAR2}\begin{array}{l}
g(\sigma_A(Y_1,Y_2),\bar X)=g(\nabla_{\bar
X}A(Y_1),Y_2)\vspace*{2mm}\\ +g(\beta(AY_1,Y_2)\mp
\beta(Y_1,AY_2),\bar X).\end{array}\end{equation}
\end{prop}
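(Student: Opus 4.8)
The plan is to compute $w_A(\bar X)(Y_1,\flat_g Y_2)=(L_X A)(Y_1,\flat_g Y_2)|_N$ directly and then convert everything from Lie derivatives to the Levi--Civita connection $\nabla$, using that $\nabla$ is torsion-free and metric. Write $A$ as a $(1,1)$-tensor acting on vectors, and use the identity $(L_X A)(Y_1)=L_X(A Y_1)-A(L_X Y_1)=[X,AY_1]-A[X,Y_1]$. Pairing with $Y_2$ via $g$ gives $w_A(\bar X)(Y_1,\flat_g Y_2)=g([X,AY_1],Y_2)|_N-g(A[X,Y_1],Y_2)|_N$. In the second term, move $A$ across using {\rm(\ref{symA})} to get $\mp g([X,Y_1],AY_2)|_N$. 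Now replace each bracket by $\nabla_X(\cdot)-\nabla_{(\cdot)}X$.

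Next I would organize the resulting terms into three groups. The terms $g(\nabla_X(AY_1),Y_2)\mp g(\nabla_X Y_1,AY_2)$ recombine: differentiating $g(AY_1,Y_2)=\pm g(Y_1,AY_2)$ along $X$ with $\nabla$ metric shows $g(\nabla_X(AY_1),Y_2)+g(AY_1,\nabla_X Y_2)=\pm g(\nabla_X Y_1,AY_2)\pm g(Y_1,\nabla_X(AY_2))$, so after the algebra these collapse to $g((\nabla_X A)Y_1,Y_2)=g(\nabla_{\bar X}A(Y_1),Y_2)$ along $N$ (the covariant derivative of the $(1,1)$-tensor $A$), which is exactly the first term on the right of {\rm(\ref{legAR2})}; here I use that along $N$ the value depends only on $\bar X=X|_N$, which is legitimate because $A$ is algebraically $N$-adapted (Proposition \ref{form2}). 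The remaining terms are $-g(\nabla_{AY_1}X,Y_2)\pm g(\nabla_{Y_1}X,AY_2)$, evaluated along $N$. Since $X$ is normal and $AY_1,AY_2\in TN$ (algebraic adaptation), the Weingarten formula gives $g(\nabla_{AY_1}X,Y_2)|_N=-g(\beta(AY_1,Y_2),\bar X)$ and $g(\nabla_{Y_1}X,AY_2)|_N=-g(\beta(Y_1,AY_2),\bar X)$, so these two terms produce exactly $g(\beta(AY_1,Y_2)\mp\beta(Y_1,AY_2),\bar X)$. Assembling the groups yields {\rm(\ref{legAR2})}, with the sign $\mp$ in front of $\beta(Y_1,AY_2)$ opposite to the $\pm$ of {\rm(\ref{symA})} as claimed.

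The main obstacle is bookkeeping rather than ideas: one must be scrupulous about \emph{when} an expression may be restricted to $N$ and the extension $X$ forgotten in favor of $\bar X$. The brackets $[X,AY_1]$ and $[X,Y_1]$ individually depend on the extensions of $X,Y_1,Y_2$ off $N$, and only the full combination $w_A(\bar X)(Y_1,\flat_g Y_2)$ is well defined on $N$; so the rearrangement into $(\nabla_{\bar X}A)$ plus Weingarten terms must be done before restricting, and then one invokes Proposition \ref{form2} to see the answer is extension-independent. A secondary point is the consistent use of {\rm(\ref{symA})} to shuttle $A$ from one slot to the other, keeping track of the sign so that the final $\mp\beta(Y_1,AY_2)$ comes out opposite to the sign in {\rm(\ref{symA})}; the metric term $g(\nabla_{\bar X}A(Y_1),Y_2)$ itself carries no sign.
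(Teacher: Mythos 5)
Your proof is correct and takes essentially the same route as the paper's: expand $(L_XA)(Y_1)$ through the torsion-free Levi--Civita connection, shuttle $A$ across the metric using {\rm(\ref{symA})} to isolate $g((\nabla_{\bar X}A)Y_1,Y_2)$, and convert the $\nabla_{(\cdot)}X$ terms into second-fundamental-form terms via the Weingarten formula. The only (harmless) difference is that the paper works at a point with special adapted-coordinate extensions satisfying $[X,\tilde Y_1]=0$, which suppresses the $A[X,Y_1]$ term from the outset, whereas you keep general extensions and let the extension-dependence cancel in the regrouping.
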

\begin{proof} We prove the equality
at every fixed point $x\in N$. During the calculations, we extend
the vectors $Y_1(x),Y_2(x),\bar X(x)$ to vector fields $\tilde
Y_1,\tilde Y_2,X$ on $M$ that are tangent, respectively, normal to
$N$. Since the final result is independent of the choice of the
extension, we may use local, adapted coordinates and take
\begin{equation}\label{extensiiaux}\tilde
Y_1=\mu^u_1\frac{\partial}{\partial y^u}\,,\tilde
Y_2=\mu^u_2\frac{\partial}{\partial y^u}\, ,
X=\xi^a\frac{\partial}{\partial x^a},\end{equation} where
$\mu^u_1,\mu^u_2,\xi^a$ are constant (namely, the components of
$Y_1(x),Y_2(x),\bar X(x)$ at the fixed point $x$). From the
equality
$$L_X(A\tilde Y_1)=[X,A\tilde Y_1]=\nabla_X(A\tilde Y_1)-\nabla_{A\tilde
Y_1}X,$$ we get
$$[(L_XA)(Y_1)]_x=[A\nabla_{X}\tilde Y_1-\nabla_{AY_1}X
+(\nabla_{X}A)(Y_1)]_x.$$ In this result we may replace
$\nabla_{A_xY_1(x)} X=-W_{\bar X(x)}\tilde{Y}_1+D_{Y_1(x)}X$,
where $W$ is the Weingarten operator of $N$ and $D$ is the
connection induced by $\nabla$ in $\nu N$. Then, using also
(\ref{symA}), we get
$$[g(\sigma_{A}(Y_1,Y_2),\bar X)]_x=[g(\beta(AY_1,Y_2),\bar X)\pm
g(\nabla_{\bar X}\tilde Y_1,AY_2)+g((\nabla_{\bar
X}A)(Y_1),Y_2)]_x.$$ But, $\nabla_{\bar X}\tilde Y_1=
\nabla_{Y_1}X+[X,\tilde Y_1]$, and the last bracket
vanishes for the chosen extensions (\ref{extensiiaux}).
Accordingly, $$[g(\nabla_{\bar X}\tilde
Y_1,AY_2)]_x=[g(\nabla_{Y_1}X,AY_2)=g(-W_{\bar
X}Y_1+D_{Y_1}X,AY_2)]_x$$
$$=-[g(\beta_x(Y_1,AY_2),X)]_x$$ and (\ref{legAR2}) follows.
\end{proof}
\begin{corol}\label{corolAR} Assume that $A$ is parallel with respect
to the Levi-Civita connection of $g$. Then, if $A$ is $g$-symmetric
$\sigma_A=0$ and if $A$ is $g$-skew-symmetric
$\sigma_A(Y_1,Y_2)=2\beta(AY_1,Y_2)$, where $\beta$ is the
Riemannian, second fundamental form of $N$ in $M$.
\end{corol}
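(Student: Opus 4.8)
The plan is to deduce the corollary directly from Proposition~\ref{legAR}. First I would observe that, $A$ being parallel, $\nabla A=0$, so $(\nabla_{\bar X}A)(Y_1)=0$ and the first term on the right-hand side of (\ref{legAR2}) drops out; for our parallel, algebraically $N$-adapted $A$ satisfying (\ref{symA}) the formula of Proposition~\ref{legAR} then reads
\[
g(\sigma_A(Y_1,Y_2),\bar X)=g\big(\beta(AY_1,Y_2)\mp\beta(Y_1,AY_2),\bar X\big),\qquad\bar X\in\Gamma\nu N,
\]
with the sign opposite to that in (\ref{symA}), exactly as in Proposition~\ref{legAR}.

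The next, and main, step would be to show that for a parallel operator $A$ preserving the orthogonal splitting $TM|_N=TN\oplus\nu N$ the second fundamental form of $N$ in $(M,g)$ is $A$-symmetric:
\[
\beta(AY_1,Y_2)=A\,\beta(Y_1,Y_2)=\beta(Y_1,AY_2).
\]
I would prove the first equality pointwise: fix $x\in N$, extend $Y_1(x),Y_2(x)$ to fields $\tilde Y_1,\tilde Y_2$ on $M$ tangent to $N$ (as in the proof of Proposition~\ref{legAR}); since $A\tilde Y_2$ is then tangent to $N$ along $N$, the Gauss formula identifies $\beta(Y_1,AY_2)$ with the $\nu N$-component of $\nabla_{\tilde Y_1}(A\tilde Y_2)$ at $x$. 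Parallelism gives $\nabla_{\tilde Y_1}(A\tilde Y_2)=A\,\nabla_{\tilde Y_1}\tilde Y_2$, and since $A$ maps $T_xN$ and $\nu_xN$ into themselves it commutes with the orthogonal projection onto $\nu_xN$, so taking normal parts yields $\beta(Y_1,AY_2)=A\,\beta(Y_1,Y_2)$. Interchanging $Y_1,Y_2$ and using the symmetry of $\beta$ then gives $A\,\beta(Y_1,Y_2)=A\,\beta(Y_2,Y_1)=\beta(Y_2,AY_1)=\beta(AY_1,Y_2)$, completing this step.

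Finally I would substitute the $A$-symmetry of $\beta$ into the displayed reduction of (\ref{legAR2}): in the $g$-symmetric case the bracket becomes $\beta(AY_1,Y_2)-\beta(Y_1,AY_2)=0$, so $g(\sigma_A(Y_1,Y_2),\bar X)=0$ for all $\bar X\in\Gamma\nu N$ and hence $\sigma_A=0$; in the $g$-skew-symmetric case the bracket becomes $\beta(AY_1,Y_2)+\beta(Y_1,AY_2)=2\beta(AY_1,Y_2)$, so $g(\sigma_A(Y_1,Y_2),\bar X)=2g(\beta(AY_1,Y_2),\bar X)$ for all $\bar X$, i.e.\ $\sigma_A(Y_1,Y_2)=2\beta(AY_1,Y_2)$. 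The only non-routine ingredient is the $A$-symmetry of $\beta$; everything else is sign bookkeeping inherited from (\ref{symA}). (One could also note that a parallel $A$ intertwines the Weingarten operators, $W_{\bar X}\circ A=A\circ W_{\bar X}$ on $TN$, and extract the same identity from the Gauss--Weingarten formulas already used in the proof of Proposition~\ref{legAR}.)
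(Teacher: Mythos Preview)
Your proposal is correct and follows essentially the same route as the paper: drop the $\nabla_{\bar X}A$ term in (\ref{legAR2}) using $\nabla A=0$, establish the $A$-symmetry $\beta(AY_1,Y_2)=A\,\beta(Y_1,Y_2)=\beta(Y_1,AY_2)$ via the Gauss formula applied to $\nabla_{Y_1}(A\tilde Y_2)$, and substitute. Your write-up is in fact slightly more explicit than the paper's in justifying why taking normal parts commutes with $A$.
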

\begin{proof}
The Gauss equation $$\nabla_{Y_1}(A\tilde Y_2)=\nabla'_{Y_1}(A
Y_2)+\beta(AY_1,Y_2),$$ where $\nabla'$ is the induced Levi-Civita
connection on $N$, implies that, if $\nabla A=0$, then
$\beta(AY_1,Y_2)=A\beta(Y_1,Y_2)$. Similarly,
$\beta(Y_1,AY_2)=A\beta(Y_1,Y_2)$. Inserting $\nabla A=0$ and the
previous results for $\beta$ in (\ref{legAR2}) we get the
announced results.\end{proof}

Another nice formula is given by
\begin{prop}\label{AsiNij} Let $A$ be either a $g$-symmetric
or a $g$-skew-symmetric $(1,1)$-tensor field on $(M,g)$ that is
algebraically adapted to the submanifold $N$ and let
$\mathcal{N}_A$ be its Nijenhuis tensor. Then, one has
\begin{equation}\label{sigmaNij}
g(\sigma_A(Y_1,Y_2),A\bar X)=\pm g(\sigma_A(Y_1,AY_2),\bar
X)+g(\mathcal{N}_A(X,Y_1),Y_2).\end{equation}
\end{prop}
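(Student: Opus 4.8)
The plan is to turn the identity into pure Lie‑bracket bookkeeping along $N$. The starting point is the formula $(L_XA)(Y)=L_X(AY)-A(L_XY)=[X,AY]-A[X,Y]$, valid for any $A\in\mathcal{T}^1_1(M)$, combined with the definition of $\sigma_A$: unwinding (\ref{defw}) and (\ref{soldform}) gives $g(\sigma_A(Y_1,Y_2),\bar X)=g\bigl([X,AY_1]-A[X,Y_1],\,Y_2\bigr)|_N$ for any extension $X$ of a normal field $\bar X$. Since $A$ is algebraically $N$-adapted, both $TN$ and $\nu N$ are $A$-invariant; in particular $AX$ is again normal to $N$ (with restriction $A\bar X$) and $AY_2$ is tangent to $N$, so $\flat_g(AY_2)\in ann\,\nu N$. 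These two remarks let me feed $AX$, respectively $AY_2$, into the defining formula for $\sigma_A$.

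Concretely, I would first record
$$g(\sigma_A(Y_1,Y_2),A\bar X)=g\bigl([AX,AY_1]-A[AX,Y_1],\,Y_2\bigr)\big|_N$$
by applying the formula above to the normal field $AX$, and
$$g(\sigma_A(Y_1,AY_2),\bar X)=g\bigl([X,AY_1]-A[X,Y_1],\,AY_2\bigr)\big|_N.$$
In the second formula I would move the endomorphism $A$ off of the argument $Y_2$ using the hypothesis (\ref{symA}); this is done twice, and since the sign in (\ref{symA}) squares to $+1$ one obtains, after multiplying through by that same sign, $\pm\,g(\sigma_A(Y_1,AY_2),\bar X)=\bigl(g(A[X,AY_1],Y_2)-g(A^2[X,Y_1],Y_2)\bigr)|_N$.

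It then remains to expand $\mathcal{N}_A(X,Y_1)=[AX,AY_1]-A[AX,Y_1]-A[X,AY_1]+A^2[X,Y_1]$, pair it with $Y_2$, restrict to $N$, and add the expression just obtained for $\pm\,g(\sigma_A(Y_1,AY_2),\bar X)$: the terms $g(A[X,AY_1],Y_2)$ and $g(A^2[X,Y_1],Y_2)$ cancel in pairs, leaving precisely $g\bigl([AX,AY_1]-A[AX,Y_1],\,Y_2\bigr)|_N$, which equals $g(\sigma_A(Y_1,Y_2),A\bar X)$ by the first displayed formula. Since $\mathcal{N}_A$ is an honest tensor, $g(\mathcal{N}_A(X,Y_1),Y_2)|_N$ depends only on $\bar X$ and the values of $Y_1,Y_2$ at the point, so the statement is well posed; as in the proof of Proposition \ref{legAR}, one may choose the extensions freely (Proposition \ref{form2}).

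No step is conceptually deep; the expected obstacle is purely one of care. I must keep the $\pm$ coming from (\ref{symA}) attached to exactly the right factors and read $(\pm)^2$ as $+$, and I must make sure every Lie bracket is evaluated along $N$ so that the appeals to algebraic adaptedness (``$AX$ normal'', ``$AY_2$ tangent'') are legitimate precisely where they are invoked.
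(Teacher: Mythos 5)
Your argument is correct and follows essentially the same route as the paper: your bracket expansion of $\mathcal{N}_A(X,Y_1)$ together with your two displayed formulas is exactly the identity $\mathcal{N}_A(X,Y_1)=(L_{AX}A)(Y_1)-A(L_XA)(Y_1)$ that the paper pairs with $Y_2$, and the remaining ingredients (that $AX$ is a normal extension of $A\bar X$, that $\flat_g(AY_2)\in ann\,\nu N$, and one application of (\ref{symA}) per term with $(\pm)^2=+1$) coincide with the paper's. Nothing further is needed.
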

\begin{proof} With the notation in the proof of Proposition \ref{legAR},
if $x\in N\subseteq M$, one has the following expression of the
Nijenhuis tensor
\begin{equation}\label{NijcuL}
[(\mathcal{N}_A)(
X,\tilde{Y}_1)]_x=[(L_{AX}A)(\tilde{Y}_1)-A(L_{X}A)(\tilde{Y}_1)]_x.\end{equation}
The required result follows by taking the $g$-scalar product of
the previous equality by $Y_2$.\end{proof}

For a concrete application, let $(M,J,g)$ be an almost Hermitian
manifold, which means that the almost complex structure $J$ is
$g$-skew-symmetric. The tensor field $J$ is algebraically adapted
to the submanifold $N$ iff $N$ is $J$-invariant, which we shall
assume hereafter. If $(M,J,g)$ is a K\"ahler manifold and $N$ is a
complex submanifold, Corollary
\ref{legAR} gives $\sigma_J(Y_1,Y_2)=2\beta(JY_1,Y_2)$ and we see
that $J$ is soldered to $(N,T^{\perp_g}N)$ iff $N$ is a totally
geodesic submanifold.

We shall extend this result to almost K\"ahler manifolds. For any
almost Hermitian manifold $(M,J,g)$ one has the K\"ahler form
$\Omega(Y_1,Y_2)=g(JY_1,Y_2)$.
\begin{prop}\label{soldJOmega} The soldering form of the almost
complex structure $J$ is related to the K\"ahler form $\Omega$ by
means of the formula
\begin{equation}\label{leg3forme2}
g(\sigma_J(Y_1,Y_2),\bar X)=2g(\beta(JY_1,Y_2),\bar
X)+d\Omega(\bar X,Y_1,Y_2).\end{equation}
\end{prop}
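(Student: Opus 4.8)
The plan is to derive the formula by computing the Lie derivative $L_X\Omega$ at a fixed point $x\in N$ and comparing it with the known soldering obstructions of $g$ (formula (\ref{invptg})) and of $J$. Since $\Omega(V_1,V_2)=g(JV_1,V_2)$, the obvious first step is to apply the Leibniz rule for the Lie derivative:
\begin{equation*}
(L_X\Omega)(Y_1,Y_2)=(L_Xg)(JY_1,Y_2)+g((L_XJ)Y_1,Y_2)+g(JY_1,(L_XJ)\dots)\ \text{— careful here}.
\end{equation*}
More precisely, writing $\Omega(Y_1,Y_2)=g(JY_1,Y_2)$ and differentiating, one gets $(L_X\Omega)(Y_1,Y_2)=(L_Xg)(JY_1,Y_2)+g((L_XJ)(Y_1),Y_2)$, because the $L_X$ falls either on $g$, or on the argument $JY_1$ through $J$, while the $Y_i$ themselves are treated as the fixed arguments of the tensor $L_X\Omega$ (this is the standard ``derivation of a contraction'' identity, and it is cleanest to just invoke formula (\ref{Lgenform})-style bookkeeping that the paper has already set up). Restricting to $N$ and using that $N$ is $J$-invariant so $JY_1$ is again tangent to $N$, the first term is $w_g(\bar X)(JY_1,Y_2)=-2g(\beta(JY_1,Y_2),\bar X)$ by (\ref{invptg}), and the second term is $w_J(\bar X)(Y_1,\flat_g Y_2)=g(\sigma_J(Y_1,Y_2),\bar X)$ by the definition (\ref{soldform}) of $\sigma_J$.

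**Next I would** identify the left-hand side $(L_X\Omega)(Y_1,Y_2)|_N$ with a term involving $d\Omega$. Here the key classical fact is that an \emph{almost Kähler} manifold is by definition one with $d\Omega=0$; however, the proposition as stated keeps $d\Omega$ explicitly, so it must hold for a general almost Hermitian manifold, and the relation to use is the intrinsic formula for the Lie derivative of a 2-form, namely Cartan's formula $L_X\Omega=d(i_X\Omega)+i_X(d\Omega)$. Evaluating on the fixed tangent vectors $Y_1,Y_2$ at $x$ and using extensions with vanishing brackets (the trick already used in the proof of Proposition \ref{legAR}: extend $Y_1(x),Y_2(x)$ to fields tangent to $N$ and $\bar X(x)$ to a normal field, arranged so $[X,\tilde Y_i]_x=0$ and $[\tilde Y_1,\tilde Y_2]_x$ is handled), the $d(i_X\Omega)$ piece and the bracket terms in the expansion of $i_X d\Omega$ must be shown to contribute exactly $d\Omega(\bar X,Y_1,Y_2)$ at $x$, with the $d(i_X\Omega)$ contribution absorbed — this is the step that needs the extension trick to kill the spurious derivative-of-$\Omega$-in-tangent-directions terms, so that only the genuine exterior-derivative term survives along $N$.

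**Combining** the three pieces gives
\begin{equation*}
d\Omega(\bar X,Y_1,Y_2)=-2g(\beta(JY_1,Y_2),\bar X)+g(\sigma_J(Y_1,Y_2),\bar X),
\end{equation*}
which rearranges to (\ref{leg3forme2}).

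**The main obstacle** I expect is the careful bookkeeping in the second paragraph: making precise, at the fixed point $x$, which terms in $L_X\Omega=d\,i_X\Omega+i_X\,d\Omega$ are ``differentiation of $\Omega$ along tangent directions'' (which one wants to cancel or reorganize) versus which assemble into $d\Omega(\bar X,Y_1,Y_2)$, and verifying that the chosen extensions from (\ref{extensiiaux})-type constructions indeed annihilate the unwanted brackets. An alternative, perhaps cleaner route that avoids Cartan's formula is purely computational: use $d\Omega(X,Y_1,Y_2)=X\Omega(Y_1,Y_2)-Y_1\Omega(X,Y_2)+Y_2\Omega(X,Y_1)-\Omega([X,Y_1],Y_2)+\Omega([X,Y_2],Y_1)-\Omega([Y_1,Y_2],X)$, expand $\Omega=g(J\cdot,\cdot)$ throughout, and recognize the combination as $(L_X\Omega)(Y_1,Y_2)$ plus correction terms that, once evaluated at $x$ with the good extensions and the Gauss–Weingarten relations, reduce to $g(\sigma_J(Y_1,Y_2),\bar X)-2g(\beta(JY_1,Y_2),\bar X)$; I would likely present this direct expansion since it parallels the style of the earlier proofs in the paper.
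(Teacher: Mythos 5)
Your proposal is correct and follows essentially the same route as the paper: apply the Leibniz rule to $\Omega=g(J\cdot,\cdot)$ to split $(L_X\Omega)(Y_1,Y_2)$ into the $w_g$ and $\sigma_J$ contributions via (\ref{invptg}) and (\ref{soldform}), then use Cartan's formula to identify $(L_X\Omega)(Y_1,Y_2)|_N$ with $d\Omega(\bar X,Y_1,Y_2)$. The step you flag as the main obstacle is in fact immediate and needs no extension trick: since $\nu N$ is $J$-invariant, the $1$-form $i(X)\Omega=g(JX,\cdot)$ pulls back to zero on $N$, so $\iota^*\bigl(d\,i(X)\Omega\bigr)=d\bigl(\iota^*i(X)\Omega\bigr)=0$ and only the $i(X)d\Omega$ term survives on tangent arguments.
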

\begin{proof} From the definition of $\Omega$ we
get
$$(L_X\Omega)(Y_1,Y_2)=(L_Xg)(JY_1,Y_2)+g(L_XJ(Y_1),Y_2),$$
which, for $\bar X\in\nu N$ for $Y_1,Y_2\in TN$ becomes
\begin{equation}\label{leg3forme}g(\sigma_J(Y_1,Y_2),\bar X)=(L_X\Omega)(Y_1,Y_2)
-g(\sigma_g(JY_1,Y_2),\bar X).\end{equation} Notice that $\Omega$
is algebraically compatible with $(N,T^{\perp_g})$, therefore,
$$(L_X\Omega)(Y_1,Y_2)=g(\sigma_\Omega(Y_1,Y_2),\bar X).$$
Since it is easy to check that for the involved arguments one has
$(di(X)\Omega)(Y_1,Y_2)=0$, by using $L_X=di(X)+i(X)d$ in
(\ref{leg3forme}), we get
\begin{equation}\label{leg3forme'}g(\sigma_J(Y_1,Y_2),\bar X)=d\Omega(\bar X,Y_1,Y_2)
-g(\sigma_g(JY_1,Y_2),\bar X).\end{equation} In view of
(\ref{invptg}), formula (\ref{leg3forme'}) is the same as the one
required by the proposition.\end{proof}
\begin{corol}\label{JsoldO} The almost complex structure $J$ is
soldered to the $J$-invariant submanifold $N$ iff the second
fundamental form of $N$ is given by the formula
\begin{equation}\label{f2ptJsold}
g(\beta(JY_1,Y_2),X)=-\frac{1}{2}d\Omega(X,Y_1,Y_2).\end{equation}
\end{corol}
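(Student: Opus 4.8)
The plan is to obtain the corollary directly from Proposition \ref{soldJOmega} together with the definition of the soldering form and the nondegeneracy of $g$ on $\nu N$. First I would recall that, since $N$ is assumed $J$-invariant, the tensor field $J$ is algebraically adapted to $(N,T^{\perp_g}N)$, so that its soldering form $\sigma_J$ is defined, and that, as noted right after (\ref{soldform}), $J$ is soldered to $(N,T^{\perp_g}N)$ if and only if $\sigma_J=0$. Thus the whole statement reduces to translating the condition $\sigma_J=0$ into a condition on $\beta$.

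Next I would simply invoke formula (\ref{leg3forme2}): for all $Y_1,Y_2\in\Gamma TN$ and all $X\in\Gamma TM$ normal to $N$ with restriction $\bar X\in\Gamma\nu N$, one has
$$g(\sigma_J(Y_1,Y_2),\bar X)=2g(\beta(JY_1,Y_2),\bar X)+d\Omega(\bar X,Y_1,Y_2).$$
Because $\sigma_J(Y_1,Y_2)$ takes values in $\nu N$ and the restriction of $g$ to $\nu N$ is positive definite, hence nondegenerate, the vanishing of $\sigma_J$ is equivalent to the vanishing of the right-hand side above for all admissible $Y_1,Y_2,\bar X$, that is, to
$$g(\beta(JY_1,Y_2),\bar X)=-\frac{1}{2}d\Omega(\bar X,Y_1,Y_2)$$
for all such arguments. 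Finally I would observe that both sides of this identity are tensorial in the normal argument and depend only on its value $\bar X$ along $N$ (and on the pointwise values of $Y_1,Y_2$), so that $\bar X$ may be replaced by the extension $X$, yielding exactly (\ref{f2ptJsold}); conversely, if (\ref{f2ptJsold}) holds then the displayed equality forces $g(\sigma_J(Y_1,Y_2),\bar X)=0$ for all $\bar X\in\Gamma\nu N$, hence $\sigma_J=0$, i.e. $J$ is soldered to $N$.

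I do not anticipate a real obstacle: the statement is a direct corollary of Proposition \ref{soldJOmega}. The only points deserving a word of care are the passage from ``$g(\sigma_J(\,\cdot\,,\,\cdot\,),\bar X)=0$ for every $\bar X\in\Gamma\nu N$'' to ``$\sigma_J=0$'', which rests on the nondegeneracy of $g|_{\nu N}$, and the harmless substitution of the normal extension $X$ for its restriction $\bar X$ in (\ref{f2ptJsold}), legitimate since all the quantities involved are evaluated along $N$.
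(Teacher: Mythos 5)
Your proof is correct and is exactly the derivation the paper intends: the corollary is stated without a separate proof precisely because it follows immediately from formula (\ref{leg3forme2}) together with the equivalence ``$J$ soldered $\Leftrightarrow \sigma_J=0$'' and the nondegeneracy of $g$ on $\nu N$, which are the two points you make explicit.
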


Then, since an almost K\"ahler manifold is characterized by the
property $d\Omega=0$, we get the main application:
\begin{prop}\label{almKtgeod} If $(M,J,g)$ is an almost K\"ahler manifold,
the almost complex structure $J$ is soldered to the submanifold $N$
iff $N$ is a $J$-invariant, totally geodesic submanifold of
$M$.\end{prop}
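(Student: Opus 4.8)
The plan is to derive Proposition \ref{almKtgeod} directly from Corollary \ref{JsoldO} together with the hypothesis $d\Omega=0$ characterizing almost K\"ahler manifolds. First I would observe that the algebraic condition for $J$ to be soldered to $N$ has already been disposed of: as remarked before Proposition \ref{soldJOmega}, the tensor $J$ is algebraically adapted to $N$ if and only if $N$ is $J$-invariant. So throughout we may assume $N$ is $J$-invariant, and it remains only to analyze when the soldering obstruction (equivalently $\sigma_J$) vanishes.

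Next I would specialize formula (\ref{f2ptJsold}) of Corollary \ref{JsoldO} to the almost K\"ahler case. Since $d\Omega=0$, the right-hand side is identically zero, so $J$ is soldered to $N$ if and only if $g(\beta(JY_1,Y_2),X)=0$ for all $Y_1,Y_2$ tangent to $N$ and all $X$ normal to $N$; equivalently, $\beta(JY_1,Y_2)=0$ for all tangent $Y_1,Y_2$. The key step is then to show that this identity is equivalent to the total geodesy of $N$, i.e.\ to $\beta\equiv 0$. One direction is trivial: if $\beta\equiv 0$ then certainly $\beta(JY_1,Y_2)=0$. For the converse, I would use the $J$-invariance of $TN$: given arbitrary tangent vectors $Z_1,Z_2$, write $Z_1=J Y_1$ for $Y_1:=-JZ_1\in TN$ (using $J^2=-\mathrm{Id}$), so that $\beta(Z_1,Z_2)=\beta(JY_1,Z_2)=0$. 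Hence $\beta\equiv 0$ and $N$ is totally geodesic.

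Conversely, if $N$ is $J$-invariant and totally geodesic, then $\beta\equiv 0$, so (\ref{f2ptJsold}) holds automatically (both sides vanish, the right side because $d\Omega=0$), and Corollary \ref{JsoldO} gives that $J$ is soldered to $N$. Combining the two implications yields the stated equivalence.

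I do not expect any genuine obstacle here: the substantive analytic work has already been carried out in Proposition \ref{soldJOmega} and Corollary \ref{JsoldO}, which express $\sigma_J$ in terms of $\beta$ and $d\Omega$. The only point requiring a moment's care is the elementary linear-algebra step showing that $\beta(JY_1,Y_2)=0$ for all tangent $Y_1,Y_2$ forces $\beta=0$, which hinges precisely on the $J$-invariance of $TN$ (an arbitrary endomorphism $A$ with $\beta(AY_1,Y_2)=0$ need not force $\beta=0$, but an invertible one preserving $TN$ does). One should also note in passing that $\beta$ is symmetric, so no ordering issues arise.
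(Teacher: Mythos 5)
Your proposal is correct and follows essentially the same route as the paper, which obtains the proposition by setting $d\Omega=0$ in Corollary \ref{JsoldO} (the paper leaves the final step implicit). Your explicit observation that $\beta(JY_1,Y_2)=0$ forces $\beta\equiv 0$ because $J$ is invertible and preserves $TN$ is exactly the point the paper takes for granted, and it is handled correctly.
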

 %\end{center}
\hspace*{7.5cm}{\small \begin{tabular}{l} Department of
Mathematics\\ University of Haifa, Israel\\ E-mail:
vaisman@math.haifa.ac.il \end{tabular}}
\end{document}